\DeclareMathOperator{\tr}{tr}
\DeclareMathOperator{\ch}{char}
\DeclareMathOperator{\M}{M}
\DeclareMathOperator{\GL}{GL}
\DeclareMathOperator{\SL}{SL}
\DeclareMathOperator{\GA}{GA}
\DeclareMathOperator{\SA}{SA}
\DeclareMathOperator{\IL}{IL}
\DeclareMathOperator{\IA}{IA}
\DeclareMathOperator{\Fix}{Fix}
\DeclareMathOperator{\Lin}{Lin}
\DeclareMathOperator*{\Id}{Id}
\newtheorem{tw}{{\sf Theorem}}[section]
\newtheorem{lem}[tw]{{\sf Lemma}}
\newtheorem{wn}[tw]{{\sf Corollary}}
\newtheorem{prop}[tw]{{\sf Proposition}}
\theoremstyle{definition}
\newtheorem{deff}[tw]{{\sf Definition}}
\newtheorem{uw}[tw]{{\sf Remark}}
\numberwithin{equation}{section}
\newcommand{\blank}{{\mspace{1mu}\cdot\mspace{1mu}}}
\title{Paradoxical decompositions of finite-dimensional non-Archimedean normed spaces}
\author{Kamil Orzechowski\\
University of Rzesz\'ow\\ 
Rejtana 16c\\
35-959 Rzesz\'ow, Poland\\
\texttt{kamilo@dokt.ur.edu.pl}}
\date{March 12, 2025}
\begin{document}

\maketitle

\renewcommand{\thefootnote}{}

\footnote{2020 \emph{Mathematics Subject Classification}: Primary 47S10; Secondary 46S10, 12J25, 26E30, 20E05, 20G25, 20H05, 22F05 46B04, 05A18.}

\footnote{\emph{Key words and phrases}: paradoxical decomposition, Banach--Tarski paradox, non-Archimedean field, non-Archimedean normed space, free group, linear and affine groups, ping-pong lemma, locally commutative group action, Tits' alternative.}

\renewcommand{\thefootnote}{\arabic{footnote}}
\setcounter{footnote}{0}

\begin{abstract}
   We show that any normed space $(K^n,\lVert\blank\rVert)$, $n\ge 2$, over a field $K$ equipped with a nontrivial non-Archimedean valuation admits a paradoxical decomposition using four pieces with respect to the group of its affine isometries, provided that the norm $\lVert\blank\rVert$ is equivalent to the maximum norm.

   It follows that any finite-dimensional normed space $(X,\lVert\blank\rVert)$ with $\dim{X}\ge 2$ over a complete non-Archimedean nontrivially valued field $(K,\lvert\blank\rvert)$ is paradoxical using four pieces with respect to the group of its affine isometries.
\end{abstract}

\section{Introduction}

The study of paradoxical decompositions began with the works of Hausdorff \cite{Haus} and Banach, Tarski \cite{BT}. The latter two mathematicians proved that any ball in $\mathbb{R}^3$ can be partitioned into finite number of pieces in such a way that the pieces can be transformed using isometries of $\mathbb{R}^3$ to obtain two balls identical with the initial one \cite[Lemma 21]{BT}, i.e., any ball in $\mathbb{R}^3$ is {\em $G$-paradoxical}, where $G$ is the group of all isometries of $\mathbb{R}^3$. This famous theorem is often called the Banach--Tarski (or Hausdorff--Banach--Tarski) paradox (sometimes the name refers to the related ``strong'' result about $G$-equidecomposability of any two bounded subsets of $\mathbb{R}^3$ with nonempty interiors \cite[Theorem 24]{BT}).

There have been many refinements, generalizations and inspired results since the classical work of Banach and Tarski was published. The monograph \cite{W} is an excellent source covering most of these topics. We especially point out that the whole Euclidean space $\mathbb{R}^n$ is paradoxical with respect to the group of all isometries for any $n\ge 3$. Moreover, only four pieces are needed for such a paradoxical decomposition of $\mathbb{R}^n$ \cite[Corollary 6.8]{W}. In general, the existence of a paradoxical decomposition of a $G$-set $X$ using four pieces is equivalent to the existence of a free non-Abelian subgroup of $G$ that acts {\em locally commutatively} (i.e., with Abelian stabilizers) on $X$ \cite[Theorems 5.5 and 5.8]{W}.

\bigskip

In our recent paper \cite{my_preprint}, there were established several results concerning paradoxical decompositions of some subsets of the normed spaces $(K^n,\Vert\blank\rVert_{\infty})$ over a non-Archimedean valued field $(K,\lvert\blank\rvert)$, where $\lVert\blank\rVert_{\infty}$ denotes the maximum norm. In particular, if the valuation $\lvert \blank\rvert$ is nontrivial and $n\ge 2$, the space $(K^n,\Vert\blank\rVert_{\infty})$ is  paradoxical with respect to a certain group of affine isometries using five pieces \cite[Theorem 5.7]{my_preprint}. All balls, and all spheres that contain the origin are paradoxical with respect to certain groups of affine isometries using four pieces \cite[Theorems 5.1 and 5.4]{my_preprint}. As for the spheres not containing $\mathbf{0}$, the number of pieces needed is $5-(-1)^n$, depending on the dimension $n\ge 2$ \cite[Theorem 5.2]{my_preprint}.

\bigskip

Our main goal is to extend the above result to the case of $K^n$ equipped with any non-Archimedean norm equivalent to $\lVert\blank\rVert_{\infty}$. This is a meaningful improvement, since it contrasts with the Archimedean case \cite{Jarosz}, where any (real or complex) Banach space $(X,\lVert \blank\rVert)$ has an equivalent norm $\lVert\blank\rVert^{'}$ such that any surjective linear isometry of $(X,\lVert\blank\rVert^{'})$ is of the form $\lambda \Id_{X}$ for some scalar $\lambda$ with $\lvert \lambda\rvert=1$; so the isometry group of $(X,\lVert\blank\rVert^{'})$ is amenable, which excludes a paradoxical decomposition of $(X,\lVert\blank\rVert^{'})$.

The second improvement is to bring down the number of pieces required for some paradoxical decompositions in \cite{my_preprint} to four, which is optimal since four is the least number of pieces in any paradoxical decomposition.

\subsubsection*{Main results}

\begin{enumerate}
    \item  {\it Let $(K,\lvert\blank\rvert)$ be a non-Archimedean nontrivially valued field and $n\ge 2$. If $\Vert \blank \rVert$ is a non-Archimedean norm on $K^n$ equivalent to the norm $\lVert \blank\rVert_{\infty}$, then $K^n$, all balls and all nonempty spheres in $K^n$ are paradoxical with respect to the group of affine isometries of $(K^n,\lVert\blank\rVert)$ using four pieces (Theorem \ref{th:main}).}
    \item {\it Let $(K,\lvert\blank\rvert)$ be a trivially valued field that is not locally finite. Let $n\ge 2$ and $\lVert\blank\rVert$ be a non-Archimedean norm on $K^n$. If the norm $\lVert \blank \rVert$ takes less than $n$ nonzero values, then $(K^n,\lVert\blank\rVert)$ is paradoxical with respect to the group of its affine isometries using four pieces; otherwise $K^n$ is not paradoxical with respect to this group (Theorem \ref{trivial_val}).}
    \item {\it  Let $K$ be a field and $n\in \mathbb{N}$.
    If $K$ is locally finite or $n=1$, then $K^n$ is not paradoxical with respect to the group of all affine bijections of $K^n$ (Theorem \ref{locally_fin}).}
\end{enumerate}

\section{Preliminaries}

\subsection{Non-Archimedean valued fields and normed spaces}

A {\em valuation} on a field $K$ is any function $\lvert\blank\rvert\colon K\to[0,\infty)$ satisfying for all $x,y\in K$ the following conditions:

\begin{enumerate*}[label=(\arabic*)]
    \item $\lvert x \rvert=0 \Leftrightarrow x=0$,
    \item $\lvert xy \rvert=\lvert x \rvert\lvert y \rvert$,
    \item $\lvert x+y \rvert\leq \lvert x \rvert+\lvert y \rvert$.
\end{enumerate*}\\
The pair $(K,\lvert\blank\rvert)$ is then called a {\em valued field}.
If we replace the condition (3) with a stronger one (3')
$\lvert x+y \rvert\leq \max\{\lvert x \rvert,\lvert y \rvert\},$ the valuation $\lvert\blank\rvert$ will be called {\em non-Archimedean}.

If $(X,d)$ is a metric space, $x_0\in X$ and $r>0$, we put\\
$B[x_0,r]:=\{x\in X: d(x,x_0)\leq r\}$, $B(x_0,r):=\{x\in X: d(x,x_0)<r\}$ and $S[x_0,r]:=\{x\in X: d(x,x_0)= r\}.$

A metric $d$ on a set $X$ is called an {\em ultrametric} if it satisfies the {\em strong triangle inequality}:
$d(x,z) \leq \max\{d(x,y), d(y,z)\}$ for all $x,y,z \in X$. The pair $(X,d)$ is then called an {\em ultrametric space}. Any ultrametric space $(X,d)$ has the {\em isosceles property}: If $x,y,z \in X$ and $d(x,y)\neq d(y,z)$, then
$d(x,z) = \max\{d(x,y), d(y,z)\}$. It is noteworthy that, in an ultrametric space, any point belonging to a ball (open or closed) of a given radius may play the role of its center.

Each valuation $\lvert\blank\rvert$ on a field $K$ induces a metric $d\colon K\times K \to [0, \infty)$, $d(x,y):=\lvert x-y \rvert$. If $\lvert\blank\rvert$ is non-Archimedean, $d$ is an ultrametric. We say that $(K,\lvert\blank\rvert)$ is {\em complete} if $(K,d)$ is a complete metric space. If $d_1$, $d_2$ are metrics on $K$ induced by non-Archimedean valuations $\lvert\blank\rvert_1$, $\lvert\blank\rvert_2$ respectively, they induce the same topology on $K$ if and only if $\lvert\blank\rvert_1$ and $\lvert\blank\rvert_2$ are {\em equivalent}, i.e., $\lvert\blank\rvert_2=(\lvert\blank\rvert_1)^c$ for some $c>0$ \cite[Theorem 1.2.2]{PG}.

Let $(K,\lvert\blank\rvert)$ be a fixed non-Archimedean valued field. We introduce the following notation:
$D_K= B[0,1]$, $\mathfrak{m}_K= B(0,1)$, and $D_{K}^{*} = S[0,1].$
The subset $D_K$ is a subring of $K$ called the {\em ring of integers} of $K$, $\mathfrak{m}_{K}$ is its unique maximal ideal, and $D_{K}^*$ is the group of units (i.e., invertible elements) of $D_K$ (cf. \cite[Lemma 1.2]{Schneider}). The quotient ring $k:=D_K/\mathfrak{m}_K$ is a field and it is called the {\em residue field} of $(K,\lvert\blank\rvert)$.

We denote by $K^{*}$ the multiplicative group of $K$. The set $\lvert K^{*} \rvert:=\{\lvert x \rvert\colon x\in K^{*}\}$ is a subgroup of the multiplicative group $\mathbb{R}_{+}$.
If $\lvert K^{*} \rvert=\{1\}$, the valuation $\lvert\blank\rvert$ is called {\em trivial}. If there exists $t:=\max{\lvert K^{*} \rvert\cap(0,1)}$, then $\lvert K^{*} \rvert=\{t^n \colon n\in \mathbb{Z}\}$ and the valuation $\lvert\blank\rvert$ is called {\em discrete}. In the remaining case $\lvert K^{*} \rvert$ is a dense subset of $[0,\infty)$ and the valuation $\lvert\blank\rvert$ is called {\em dense}.

By Krull's theorem \cite[Theorem 14.1]{UC}, if $(K,\lvert\blank\rvert)$ is a non-Archimedean valued field and $L$ is an extension of $K$, then there exists a non-Archimedean valuation $\lvert\blank\rvert_L$ on $L$ that extends $\lvert\blank\rvert$. Moreover, if $(K,\lvert\blank\rvert)$ is complete and $L$ is an algebraic extension of $K$, then such an extension $\lvert\blank\rvert_L$ is unique \cite[Theorem 14.2]{UC}.

A field $K$ is called {\em locally finite} if any finitely generated subfield of $K$ is finite. Evidently, $K$ is locally finite if and only if it is an algebraic extension of a finite field. Locally finite fields are precisely those that admit only the trivial valuation \cite[Corollary 14.3]{UC}.

By Ostrowski's theorem \cite[Theorem 10.1]{UC}, every nontrivial non-Archimedean valuation on $\mathbb{Q}$ is equivalent to the {\em $p$-adic valuation $\lvert\blank\rvert_p$}, for some prime number $p$, defined as follows:
\begin{equation}\label{p_adic_val}
    \lvert 0 \rvert_p:= 0, \; \left\lvert p^k \frac{m}{n}\right \rvert_p:=p^{-k} \quad \text{for } k,m,n\in\mathbb{Z}, \quad p\nmid m, p\nmid n.
\end{equation}
The completion of $(\mathbb{Q}, \lvert\blank\rvert_p)$ is called the {\em field of $p$-adic numbers} \cite[Definition 1.2.7]{PG} and denoted by $(\mathbb{Q}_p, \lvert\blank\rvert_p)$. It is a standard example of a non-Archimedean locally compact field.

Let $(K,\lvert\blank\rvert)$ be a non-Archimedean valued field. We have the following dichotomy. Assume that $K$ and its residue field $k$ have different characteristics, i.e., $\ch{K}=0$ and $\ch{k}=p$ for some prime number $p$. Then the valuation $\lvert \blank\rvert$ restricted to the prime subfield $\mathbb{Q}\subseteq K$ is nontrivial, so it equals $\lvert\blank\rvert_{p}^{c}$ for some $c>0$, where $\lvert \blank\rvert_p$ is the $p$-adic valuation defined by \eqref{p_adic_val}.
On the other hand, if $\ch{K}=\ch{k}$, then the valuation $\lvert \blank\rvert$ restricted to the prime subfield of $K$ is trivial.

\medskip

Let $X$ be a linear space over a non-Archimedean valued field $(K,\lvert\blank\rvert)$. A function $\lVert \blank \rVert \colon X\to [0, \infty)$ is called a {\em non-Archimedean norm} (briefly: norm) on $X$ if for all $x,y \in X$ and $\alpha\in K$ we have the following conditions:

\begin{enumerate*}[label=(\arabic*)]
   \item $\lVert x \rVert =0 \Leftrightarrow x=0$,
   \item $\lVert \alpha x \rVert = \lvert \alpha \rvert \lVert x \rVert $,
   \item $\lVert x+y \rVert  \leq \max\{\lVert x \rVert ,\lVert y \rVert \}$.
\end{enumerate*}\\
The pair $(X,\lVert \blank \rVert )$ is then called a {\em non-Archimedean normed space} over the non-Archimedean valued field $(K,\lvert\blank\rvert)$.
The norm $\lVert \blank \rVert $ induces an ultrametric $d$ on $X$, namely $d(x,y):=\lVert x-y \rVert $.
We say that two norms $\lVert\blank\rVert_1$, $\lVert\blank\rVert_2$ on $X$ are {\em equivalent} if there exist $C_1, C_2 >0$ such that
$C_1 \lVert x\rVert_1 \le \lVert x\rVert_2 \le C_2 \lVert x\rVert_1$ for all $x\in X$. If $(K,\lvert\blank\rvert)$ is complete and $X$ is finite-dimensional, then any two norms on $X$ are equivalent \cite[Theorem 13.3]{UC}.

The norm $\lVert x \rVert_{\infty} :=\max \{\lvert x_1 \rvert, \dots, \lvert x_n \rvert\}$, for $x=(x_1,\dots,x_n)\in K^n$, is called the {\em maximum norm}. It is the standard norm on $K^n$. An example of a norm on $K^n$ equivalent to $\lVert\blank\rVert_{\infty}$ is the {\em weighted norm} $\lVert\blank\rVert_{w}$ for a vector $w=(w_1,\dots,w_n)$ of positive real numbers, defined by {$\lVert x\rVert_{w}:=\max\{w_1 \lvert x_1 \rvert, \dots, w_n \lvert x_n \rvert\}$} for $x=(x_1,\dots,x_n)\in K^n$.


\medskip

Let $K$ be a field and $n\in \mathbb{N}$.
Recall that an {\em affine mapping} $g\colon K^n \to K^n$ is a mapping of the form
\[
    g(x)=L(g)x+\tau(g), \quad x\in K^n,
\]
where $L(g)\in \M(n,K)$ is a linear mapping and $\tau(g) \in K^n$ is a fixed vector; $L(g)$ and $\tau(g)$ are called the {\em linear} and {\em translation} part of $g$, respectively. Notice that $g$ is invertible if and only if $L(g)\in\GL(n,K)$. The group of all invertible affine transformations of $K^n$ is called the ($n$-th) {\em general affine group} over $K$ and denoted by $\GA(n,K)$. It is isomorphic to the semidirect product $\GL(n,K)\ltimes K^n$ with respect to the natural action of $\GL(n,K)$ on the additive group $K^n$; under this isomorphism we can write $g=(L(g),\tau(g))$. The mapping $g\mapsto L(g)$ is a group homomorphism from $\GA(n,K)$ onto $\GL(n,K)$. If $H_1$ is a subgroup of $\GL(n,K)$ and $H_2$ is an $H_1$-invariant subgroup of $K^n$, then $H_1\ltimes H_2$ is a subgroup of $\GA(n,K)$. We will need some subgroups of this form later. For instance, if $R\subseteq K$ is a subring containing $1$, we put $\SA(n,R):= \SL(n,R) \ltimes R^n$, where
$\SL(n,R)=\{A\in \GL(n,R)\colon \det{A}=1\}$.

\begin{deff}\label{congruence_subgroup}
    Let $(K,\lvert\blank\rvert)$ be a non-Archimedean valued field, $\varepsilon\in (0,1]$, $n\in\mathbb{N}$ and $H\le \GL(n,D_K)$. The {\em $\varepsilon$-principal congruence subgroup} of $H$ is the subgroup $H\cap \GL(n,D_K,\varepsilon)$, where
    \begin{equation}\label{eq:GL(n,D)}
        \GL(n,D_K,\varepsilon):=\{[a_{i,j}]\in \GL(n,D_K)\colon \lvert a_{i,j}-\delta_{i,j}\rvert \le \varepsilon \; \textrm{for } 1\le i,j\le n\}
    \end{equation}
    and $[\delta_{i,j}]=I_n$ denotes the identity matrix.
    Any subgroup of $H$ containing $H\cap \GL(n,D_K,\varepsilon)$ for some $\varepsilon\in (0,1]$ is called a {\em congruence subgroup} of $H$.
\end{deff}

Notice that $\GL(n,D_K,\varepsilon)$ is the kernel of the group homomorphism
$\GL(n,D_K)\to \GL(n,D_K/B[0,\varepsilon])$ induced by the quotient homomorphism of rings $D_K \to D_K /B[0,\varepsilon]$. See \cite[Chapter 6]{Lub} for more about congruence subgroups and their applications.

For a subring $R\subseteq D_K$ containing $1$, we will write $\GL(n,R,\varepsilon)$ and $\SL(n,R,\varepsilon)$ for the $\varepsilon$-principal congruence subgroup of $\GL(n,R)$ and $\SL(n,R)$, respectively. We us also define two important classes of subgroups of $\GA(n,R)$, namely:
\begin{gather*}
    \GA(n,R,\varepsilon):=\GL(n,R,\varepsilon) \ltimes (R^n \cap (B[0,\varepsilon])^n),\\
    \SA(n,R,\varepsilon):=\SL(n,R,\varepsilon) \ltimes (R^n \cap (B[0,\varepsilon])^n).
\end{gather*}
The semidirect products above are well defined since $(R^n \cap (B[0,\varepsilon])^n)$ is a $\GL(n,R,\varepsilon)$-invariant subgroup of $K^n$. Indeed, we only need to check that $Ax \in (B[0,\varepsilon])^n$ for any 
$A\in \GL(n,D_K,\varepsilon)$ and $x=(x_1\dots,x_n)\in (B[0,\varepsilon])^n$. It is sufficient to show that $y:=Ax -x\in (B[0,\varepsilon])^n$. By \eqref{eq:GL(n,D)}, we have
$\lvert y_i\rvert \le \max_{1\le j\le n} \lvert a_{i,j}-\delta_{i,j}\rvert \cdot \lvert x_j \rvert \le \varepsilon^2 \le \varepsilon$ for any $1\le i\le n$, as desired.

\subsection{Paradoxical decompositions}

We say that a nonempty subset $E$ of a $G$-set $X$ (i.e., a set $X$ with an action of a group $G$ on $X$) is {\em $G$-paradoxical using $r$ pieces} \cite[Definition 5.1]{W} if for some positive integers $m$, $n$ with $m+n=r$ there exist pairwise disjoint subsets $A_1,\dots,A_m,B_1,\dots,B_n$ of $E$ and elements $g_1,\dots,g_m,h_1,\dots,h_n\in G$ such that
\begin{equation}\label{rparwzor}
    E=A_1 \sqcup \dots \sqcup A_m \sqcup B_1 \sqcup \dots \sqcup B_n =\bigsqcup_{i=1}^{m} g_i(A_i) = \bigsqcup_{j=1}^{n} h_j(B_j),
\end{equation}
where the symbols $\sqcup$ and $\bigsqcup$ indicate that the components of the respective unions are pairwise disjoint.
In such a case the decomposition \eqref{rparwzor} of $E$ is called {\em a $G$-paradoxical decomposition}.
In general, we do not require $E$ to be $G$-invariant; consider for example the classical Banach--Tarski paradox, where the unit ball in $\mathbb{R}^3$ is not invariant under the isometry group of $\mathbb{R}^3$ (the invariance is violated by translations).

We say that an action of a group $G$ on a set $X$ is {\em locally commutative} if $g(x)=h(x)=x$ implies $gh=hg$ for any $g,h\in G$ and $x\in X$ (in other words, the stabilizer of each $x\in X$ is an Abelian subgroup of $G$).

It is known that a $G$-set $X$ is $G$-paradoxical using four pieces if and only if $G$ contains a free subgroup of rank two $F_2$ whose action on $X$ is locally commutative. More precisely, $X$ has a decomposition
$X=A_1 \sqcup A_2 \sqcup A_3 \sqcup A_4 = A_1 \sqcup a(A_2) = A_3 \sqcup b(A_4)$ for $a,b \in G$ if and only if $\{a,b\}$ is a basis of a free subgroup of rank two $F_2\le G$ whose action on $X$ is locally commutative \cite[Theorems 5.5 and 5.8]{W}.

\section{Results}

\subsection{Linear and affine isometries of normed spaces $(K^n,\lVert\blank\rVert)$}

Let $(K,\lvert\blank\rvert)$ be a non-Archimedean valued field and $\lVert\blank\rVert$ a non-Archimedean norm on $K^n$ for some $n\in \mathbb{N}$. We denote by $\IL(K^n,\lVert\blank\rVert)$ and $\IA(K^n,\lVert\blank\rVert)$ the groups of all linear and, respectively, affine isometries of the normed space $(K^n,\lVert\blank\rVert)$.
Clearly, we have $\IA(K^n,\lVert\blank\rVert) = \IL(K^n,\lVert\blank\rVert) \ltimes K^n$.

We will prove that if a norm $\lVert\blank\rVert$ on $K^n$ is equivalent to the norm $\lVert\blank\rVert_{\infty}$, then the intersection
$\IL(K^n,\lVert\blank\rVert)\cap \GL(n,D_K)$ is a congruence subgroup of $\GL(n,D_K)$.

\begin{prop}\label{IL_are_congruence_groups}
    Let $(K,\lvert\blank\rvert)$ be a non-Archimedean valued field and $n\in\mathbb{N}$. If a norm $\lVert\blank\rVert$ on $K^n$ is equivalent to the norm $\lVert\blank\rVert_{\infty}$, then there exists $\varepsilon\in (0,1]$ such that
    $\GL(n,D_K,\varepsilon)\le \IL(K^n,\lVert\blank\rVert)$ and $\GA(n,D_K,\varepsilon)\le \IA(K^n,\lVert\blank\rVert)$.
\end{prop}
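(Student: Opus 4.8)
The plan is to show directly that, for a suitable $\varepsilon$, every matrix in $\GL(n,D_K,\varepsilon)$ preserves the norm $\lVert\blank\rVert$, and then to deduce the affine statement almost for free. The guiding idea is that the equivalence of $\lVert\blank\rVert$ with $\lVert\blank\rVert_\infty$ transfers the easy coordinatewise estimates valid for $\lVert\blank\rVert_\infty$ to the norm $\lVert\blank\rVert$, while the isosceles property of the induced ultrametric turns a strict inequality between two norms into an exact equality for their sum.

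First I would fix constants $C_1,C_2>0$ with $C_1\lVert x\rVert_\infty \le \lVert x\rVert \le C_2\lVert x\rVert_\infty$ for all $x\in K^n$. Given $A=[a_{i,j}]\in\GL(n,D_K,\varepsilon)$, I would write $A=I_n+N$ with $N=[a_{i,j}-\delta_{i,j}]$, so that $\lvert a_{i,j}-\delta_{i,j}\rvert\le\varepsilon$ for all $i,j$ by \eqref{eq:GL(n,D)}. A direct application of the non-Archimedean inequality to each coordinate $(Nx)_i=\sum_{j} (a_{i,j}-\delta_{i,j})x_j$ gives $\lVert Nx\rVert_\infty\le\varepsilon\lVert x\rVert_\infty$, and combining this with the equivalence yields $\lVert Nx\rVert\le (C_2/C_1)\,\varepsilon\,\lVert x\rVert$.

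The decisive step is the choice of $\varepsilon$: I would take $\varepsilon\in(0,1]$ small enough that $(C_2/C_1)\varepsilon<1$, for instance $\varepsilon=\min\{1,\,C_1/(2C_2)\}$. Then $\lVert Nx\rVert<\lVert x\rVert$ for every nonzero $x$, so $\lVert x\rVert\neq\lVert Nx\rVert$ and the isosceles property gives $\lVert Ax\rVert=\lVert x+Nx\rVert=\max\{\lVert x\rVert,\lVert Nx\rVert\}=\lVert x\rVert$. Since $A\in\GL(n,K)$ is bijective and norm-preserving, it is a linear isometry, whence $\GL(n,D_K,\varepsilon)\le\IL(K^n,\lVert\blank\rVert)$.

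For the affine part I would invoke $\IA(K^n,\lVert\blank\rVert)=\IL(K^n,\lVert\blank\rVert)\ltimes K^n$. Every $g=(A,\tau)\in\GA(n,D_K,\varepsilon)$ has linear part $A\in\GL(n,D_K,\varepsilon)\le\IL(K^n,\lVert\blank\rVert)$ and an arbitrary translation part $\tau$; as $g(x)-g(y)=A(x-y)$, the map $g$ preserves distances and hence lies in $\IA(K^n,\lVert\blank\rVert)$. Thus the same $\varepsilon$ works for both inclusions. I do not expect a genuine obstacle here: the only point requiring care is coupling the choice of $\varepsilon$ with the norm equivalence so as to produce the \emph{strict} inequality $\lVert Nx\rVert<\lVert x\rVert$, which is exactly what the isosceles property needs; everything else is routine.
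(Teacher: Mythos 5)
Your proof is correct and follows essentially the same route as the paper: write $A=I_n+N$, bound $\lVert Nx\rVert_\infty\le\varepsilon\lVert x\rVert_\infty$ coordinatewise, and transfer the estimate through the equivalence constants. The only difference is the final step: the paper takes the sharper $\varepsilon=C_1/C_2$, which yields only the non-strict bound $\lVert Nx\rVert\le\lVert x\rVert$ and hence $\lVert Ax\rVert\le\lVert x\rVert$, and then obtains the reverse inequality by applying the same estimate to $A^{-1}\in\GL(n,D_K,\varepsilon)$; you instead shrink $\varepsilon$ to force the strict inequality $\lVert Nx\rVert<\lVert x\rVert$ and conclude equality in one stroke via the isosceles property. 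Both are valid; your variant trades a slightly smaller congruence subgroup for not having to invoke $A^{-1}$. (One cosmetic slip: the translation part of an element of $\GA(n,D_K,\varepsilon)$ is not arbitrary but constrained to $(B[0,\varepsilon])^n$ by the paper's definition --- this does not affect your argument, since every translation is an isometry.)
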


\begin{proof}
    Let $C_1,C_2>0$ be such that $C_1 \lVert x\rVert_{\infty} \le \lVert x\rVert \le C_2 \lVert x\rVert_{\infty}$ for all $x\in K^n$. Let us put $\varepsilon:=\frac{C_1}{C_2}$.
    Assume that $A\in\GL(n,D_K,\varepsilon)$ and denote $B=[b_{i,j}]:=A-I_n$. Then, for all $x=(x_1,\dots,x_n)\in K^n$, we have
    \begin{equation*}
        \lVert Bx\rVert_{\infty}=\max_{1\le i\le n}\left\lvert \sum_{j=1}^{n} b_{i,j}x_j\right\rvert \le \max_{1\le i,j\le n} \lvert b_{i,j} \rvert \lvert x_j \rvert\le \varepsilon \lVert x\rVert_{\infty}.
    \end{equation*}
    Hence, $\lVert Ax -x\rVert =\lVert Bx\rVert \le C_2 \lVert Bx\rVert_{\infty} \le C_{2}\,\varepsilon \lVert x\rVert_{\infty}\le C_1 \lVert x\rVert_{\infty}\le \lVert x\rVert$, from which $\lVert Ax\rVert\le \lVert x\rVert$ follows. Since $A^{-1}\in \GL(n,D_{K},\varepsilon)$, we similarly obtain $\lVert A^{-1}x\rVert\le \lVert x\rVert$, so $A\in \IL(K^n,\lVert\blank\rVert)$.

    As we have shown the containment $\GL(n,D_K,\varepsilon)\le \IL(K^n,\lVert\blank\rVert)$, it becomes obvious that $\GA(n,D_K,\varepsilon)\le \IA(K^n,\lVert\blank\rVert)$.
\end{proof}

\begin{wn}\label{cor:complete}
    If $(K,\lvert\blank\rvert)$ is a complete non-Archimedean valued field, $n\in\mathbb{N}$ and $\lVert \blank \rVert$ is a norm on $K^n$, then there exists $\varepsilon\in (0,1]$ such that
    $\GL(n,D_K,\varepsilon)\le \IL(K^n,\lVert\blank\rVert)$ and $\GA(n,D_K,\varepsilon)\le \IA(K^n,\lVert\blank\rVert)$.
\end{wn}




\subsection{Paradoxical decompositions of normed spaces $(K^n, \lVert\blank\rVert)$}

Assume that $(K,\lvert\blank\rvert)$ is a non-Archimedean nontrivially valued field. We are going to prove that, for any $n\ge 2$ and $\varepsilon \in (0,1]$, the group $\SA(n,D_K,\varepsilon)$ contains a free subgroup of rank two $F_2$ acting locally commutatively on $K^n$. That will yield a paradoxical decomposition of $K^n$ using four pieces.

\medskip

It turns out that, after a suitable embedding $\iota$ of $\GA(2,K)$ into $\GA(n,K)$, a locally commutative action of a subgroup $G \le \GA(2,K)$ on $K^2$ gives rise to a locally commutative action of $\iota(G)\le \GA(n,K)$ on $K^n$ for $n>2$. Under the lemma below, we can thus focus on the case $n=2$.

\begin{lem}\label{embedding}
    Let $(K,\lvert\blank\rvert)$ be a non-Archimedean valued field, $n>2$ and $1\le i< n$. For $g=(A,\tau)\in \GA(2,K)$, define $\iota(g):=(A',\tau')\in \GA(n,K)$ as follows. The matrix $A'$ is given by
    \[
    A':=
    \begin{pmatrix}
       1 & 0 &  & &  0 \\
       0 & \ddots &  \ddots & & \\
        & \ddots & A & \ddots &   \\
        & & \ddots & \ddots & 0 \\
      0 & & & 0 & 1   
    \end{pmatrix}
    ,\]
    where the block $A$ occupies the rows and columns labeled $i$ and $i+1$, and the vector $\tau'$ is defined by
    $\tau'_{i}:=\tau_1$, $\tau'_{i+1}=\tau_2$, $\tau'_{s}=0$ for $s\not\in\{i,i+1\}$.

    Then the map $\iota\colon \GA(2,K)\to \GA(n,K)$ is a group embedding and $\iota(\SA(2,D_K,\varepsilon))\le \iota(\SA(n,D_K,\varepsilon))$ for any $\varepsilon \in (0,1]$. Moreover, if a subgroup $G\le \GA(2,K)$ acts locally commutatively on $K^2$, then so does $\iota(G)$ on $K^n$.
\end{lem}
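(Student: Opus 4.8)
The plan is to verify the three claims in the lemma separately, each reducing to a direct computation with the block structure of $\iota$.

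First I would check that $\iota$ is a group homomorphism. Writing $g=(A,\tau)$ and $h=(B,\sigma)$ in $\GA(2,K)$, their product in the semidirect product is $gh=(AB,\,A\sigma+\tau)$. I would verify that $\iota(g)\iota(h)=(A'B',\,A'\sigma'+\tau')$ agrees with $\iota(gh)=((AB)',(A\sigma+\tau)')$. The linear part is clear: block-diagonal matrices that are the identity off the $\{i,i+1\}$ block multiply blockwise, so $A'B'=(AB)'$. For the translation part, I must confirm $A'\sigma'+\tau'=(A\sigma+\tau)'$; since $\sigma'$ is supported on coordinates $i,i+1$ and $A'$ acts as $A$ on that block and as the identity elsewhere, $A'\sigma'$ is supported on $\{i,i+1\}$ and equals $(A\sigma)'$ there, giving the claim. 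Injectivity is immediate because $A\mapsto A'$ and $\tau\mapsto\tau'$ are each injective. This establishes that $\iota$ is an embedding.

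Next I would address the inclusion $\iota(\SA(2,D_K,\varepsilon))\le\SA(n,D_K,\varepsilon)$. (I read the stated $\iota(\SA(n,\dots))$ as a typo for $\SA(n,\dots)$.) Given $g=(A,\tau)$ with $A\in\SL(2,D_K,\varepsilon)$ and $\tau\in(B[0,\varepsilon])^2$, I must check that $A'\in\SL(n,D_K,\varepsilon)$ and $\tau'\in(B[0,\varepsilon])^n$. The determinant of the block-diagonal $A'$ equals $\det A=1$, and the entries $a'_{s,t}-\delta_{s,t}$ are nonzero only inside the $\{i,i+1\}$ block, where they coincide with $a_{p,q}-\delta_{p,q}$, all of absolute value $\le\varepsilon$; hence $A'\in\SL(n,D_K,\varepsilon)$. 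The translation $\tau'$ has entries $\tau_1,\tau_2$ in positions $i,i+1$ and zeros elsewhere, so $\tau'\in(B[0,\varepsilon])^n$. Both conditions hold, giving the inclusion.

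The main point — and the step I expect to require the most care — is the preservation of local commutativity. The key structural observation is that $\iota(g)$ fixes a point $x=(x_1,\dots,x_n)\in K^n$ if and only if the restriction $g=(A,\tau)$ fixes the pair $(x_i,x_{i+1})\in K^2$: indeed, coordinates outside $\{i,i+1\}$ are fixed automatically by $\iota(g)$, while the action on the $\{i,i+1\}$ coordinates is exactly the $\GA(2,K)$-action of $g$. So I would argue as follows. Suppose $\iota(g)$ and $\iota(h)$ both fix some $x\in K^n$. By the observation, $g$ and $h$ both fix the point $(x_i,x_{i+1})\in K^2$. Since $G$ acts locally commutatively on $K^2$, it follows that $gh=hg$ in $G$, and applying the homomorphism $\iota$ gives $\iota(g)\iota(h)=\iota(gh)=\iota(hg)=\iota(h)\iota(g)$. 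Thus $\iota(G)$ acts locally commutatively on $K^n$, completing the proof. The only subtlety is making the ``if and only if'' for fixed points precise, which is a routine coordinatewise check given the explicit form of $A'$ and $\tau'$.
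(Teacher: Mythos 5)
Your proposal is correct and follows essentially the same route as the paper: the paper dismisses the homomorphism and congruence-subgroup checks as routine (which you spell out in detail), and its local-commutativity argument is exactly yours — a common fixed point $x$ of $\iota(g),\iota(h)$ yields the common fixed point $(x_i,x_{i+1})$ of $g,h$, whence $gh=hg$ and the images commute. Your reading of $\iota(\SA(n,D_K,\varepsilon))$ as a typo for $\SA(n,D_K,\varepsilon)$ is the right one.
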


\begin{proof}
    It is not hard to check that $\iota$ is an embedding and $\iota(\SA(2,D_K,\varepsilon))\le \iota(\SA(n,D_K,\varepsilon))$ for any $\varepsilon \in (0,1]$.
    Assume that $\iota(g)$, $\iota(h)$ have a common fixed point $x\in K^n$ for some $g,h\in G$. By the definition of $\iota$, $(x_i,x_{i+1})$ is then a common fixed point of $g$ and $h$. Hence, $gh=hg$ and $\iota(g)\iota(h)=\iota(h)\iota(g)$.
\end{proof}

Let us begin with the case when $\ch{K}=0$ and $\ch{k}=p$ for some prime number $p$.
We will apply a result of Magnus \cite[Theorems 3 and 4]{Magnus}, where he constructed an explicit basis of a free subgroup of $\SL(2,\mathbb{Z})$ with some additional properties. Then we will use a recent result of Pham \cite[Theorem 1.1]{Pham}, which can be thought of as an affine variant of the {\em uniform Tits alternative} with the additional property of local commutativity (see \cite[Theorems 1 and 2]{Tits} for the classical Tits alternative).

\begin{prop}\label{different_characteristics}
    Let $(K,\lvert\blank\rvert)$ be a non-Archimedean valued field with the residue field $k$. Assume that $\ch{K}\ne \ch{k}$. Then, for any $n\ge 2$ and $\varepsilon\in (0,1]$, there exists a free subgroup of rank two $F_2 \le \SA(n,\mathbb{Z},\varepsilon)$ acting locally commutatively on $K^n$.
\end{prop}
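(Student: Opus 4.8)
The plan is to reduce the statement to the planar case $n=2$ and then to manufacture the required affine free group by combining Magnus's explicit free subgroup of $\SL(2,\mathbb{Z})$ with Pham's affine Tits alternative, using a high power of $p$ to force everything inside the congruence subgroup. First I would invoke Lemma \ref{embedding}: since we are in the case $\ch{K}=0$, the prime subfield is $\mathbb{Q}$, so $\mathbb{Z}\subseteq D_K$ and $\SA(2,\mathbb{Z},\varepsilon)\le\SA(2,D_K,\varepsilon)$; the block construction defining $\iota$ has integer entries, hence $\iota(\SA(2,\mathbb{Z},\varepsilon))\le\SA(n,\mathbb{Z},\varepsilon)$, and $\iota$ sends locally commutative actions on $K^2$ to locally commutative actions on $K^n$. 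Thus it suffices to produce a rank-two free subgroup of $\SA(2,\mathbb{Z},\varepsilon)$ acting locally commutatively on $K^2$.

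Next I would exploit the mixed-characteristic hypothesis. Because $\ch{K}=0$ and $\ch{k}=p$, the restriction of $\lvert\blank\rvert$ to $\mathbb{Q}\subseteq K$ equals $\lvert\blank\rvert_p^{c}$ for some $c>0$; in particular $\mathbb{Z}\subseteq D_K$, $\lvert p\rvert=p^{-c}<1$, and $p$ is topologically nilpotent. I would fix $N$ with $\lvert p^N\rvert\le\varepsilon$. Then any $M\in\SL(2,\mathbb{Z})$ with $M\equiv I_2\pmod{p^N}$ lies in $\SL(2,\mathbb{Z},\varepsilon)$, since each entry $M_{ij}-\delta_{ij}$ lies in $p^N\mathbb{Z}$ and so has absolute value $\le\lvert p^N\rvert\le\varepsilon$; likewise every vector in $p^N\mathbb{Z}^2$ lies in $\mathbb{Z}^2\cap(B[0,\varepsilon])^2$. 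This is precisely the mechanism that the hypothesis $\ch{K}\neq\ch{k}$ supplies and that the equal-characteristic case lacks, where no integer is topologically nilpotent.

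For the linear part I would apply Magnus's theorems \cite{Magnus} to obtain explicit free generators $A,B\in\SL(2,\mathbb{Z})$ of a rank-two free group carrying the additional eigenvalue/trace properties that serve as the hypotheses of Pham's result (in particular that no nontrivial element admits $1$ as an eigenvalue, so that each nontrivial affine lift has a well-defined fixed point). Replacing $A,B$ by $A^{m},B^{m}$, where $m$ is a common multiple of the orders of $A$ and $B$ in the finite group $\SL(2,\mathbb{Z}/p^N\mathbb{Z})$, I may assume $A\equiv B\equiv I_2\pmod{p^N}$; passing to these powers keeps the generators a free basis and preserves the absence of eigenvalue $1$. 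Then I would invoke Pham's Theorem~1.1 \cite{Pham} to obtain translations $\sigma,\rho$ such that $a:=(A,\sigma)$ and $b:=(B,\rho)$ generate a free group of rank two whose affine action on $K^2$ is locally commutative; the uniform (over all fields) nature of Pham's conclusion is exactly what licenses local commutativity for our particular $K$. Finally, since all linear parts of $\langle a,b\rangle$ now lie $\equiv I_2\pmod{p^N}$, the map $(L,\tau)\mapsto\tau\bmod p^N$ is a homomorphism $\langle a,b\rangle\to(\mathbb{Z}/p^N\mathbb{Z})^2$; its kernel $H$ has finite index in $\langle a,b\rangle\cong F_2$, hence is free of rank $\ge 2$, still acts locally commutatively (being a subgroup), and satisfies $H\le\SA(2,\mathbb{Z},\varepsilon)$ by the choice of $N$. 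Any two free generators of $H$ give the desired $F_2$.

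The genuinely hard part is none of the bookkeeping above but Pham's theorem itself, namely arranging \emph{abelian point-stabilizers} of the affine free group over the given field $K$; this is why it is imported as a black box, with Magnus's structural properties of $\langle A,B\rangle$ serving only to verify its hypotheses. By contrast, the reduction to $n=2$, the topological-nilpotence argument placing integer data into the $\varepsilon$-congruence subgroup, and the finite-index descent that fixes the translation parts are all routine; the one subtlety to check carefully is that raising the Magnus generators to the power $m$ does not destroy the eigenvalue condition required by Pham, which holds since every nontrivial element of $\langle A^m,B^m\rangle$ is a nontrivial element of $\langle A,B\rangle$.
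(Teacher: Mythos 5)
Your overall strategy matches the paper's (reduce to $n=2$ via Lemma \ref{embedding}, use Magnus for the linear data, Pham for local commutativity, and the smallness of $\lvert p\rvert$ to land in the $\varepsilon$-congruence subgroup), but the central step misuses Pham's theorem. Pham's Theorem 1.1 does not ``obtain translations $\sigma,\rho$'' for prescribed linear parts $A,B$: it takes as input an already-constructed finitely generated subgroup $\Gamma\le\GA(2,\mathbb{Q})$ that is not virtually solvable (and has no global fixed point --- otherwise local commutativity can fail at that point), and it returns two elements of a bounded power $S^N$ of the generating set, i.e.\ words in the generators you fed it, forming a free locally commutative pair. So you must first build the affine group yourself: choose translation parts, check that they are integral and of absolute value at most $\varepsilon$ so that the generators lie in $\SA(2,\mathbb{Z},\varepsilon)$ (then so does everything Pham returns, which makes your final finite-index descent unnecessary), and verify the no-global-fixed-point hypothesis. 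The paper does exactly this: it takes $g=(A_{p^s},\tau)$ and $h=(A_{p^t},\tau)$ with $\tau=(2p^s,0)$, uses the nonparabolicity of Magnus's matrices to show that each nontrivial element has a unique fixed point, and computes $\Fix(g)\ne\Fix(h)$. As written, your argument has no affine group to apply Pham to, no control on whether $\sigma,\rho$ are even integral (which your mod-$p^N$ homomorphism on translation parts silently assumes), and no verification of the fixed-point hypothesis.

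A second, smaller gap: Pham's conclusion gives local commutativity of the action on $\mathbb{Q}^2$, not on $K^2$. The paper bridges this by observing that every point of $K^2\setminus\mathbb{Q}^2$ has trivial stabilizer in $\Gamma$ (again via nonparabolicity: each nontrivial element fixes exactly one point, and that point lies in $\mathbb{Q}^2$). Your appeal to the ``uniform over all fields'' nature of Pham's theorem does not substitute for this argument. Your remaining steps --- the reduction to $n=2$, the observation that $\lvert p^N\rvert\le\varepsilon$ forces matrices congruent to $I_2$ modulo $p^N$ into $\SL(2,\mathbb{Z},\varepsilon)$, and the fact that powers of free generators remain a free basis with no nontrivial element having eigenvalue $1$ --- are correct and close to the paper, which sidesteps the power-raising entirely by using $A_{p^s}$ and $A_{p^t}$, whose off-diagonal entries are already divisible by $p^s$.
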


\begin{proof}
    Assume first that $n=2$. Let $\varepsilon \in (0,1]$, $p=\ch{k}$ and $s, t\in \mathbb{N}$, $s< t$ be such that $\lvert p^s\rvert\le \varepsilon$. Let us put
    \begin{equation*}
        A_m:=\left(
        \begin{array}{cc}
                4m^2 + 1 & 2m \\
                2m       & 1
            \end{array}
        \right),\quad
        m\in \mathbb{N}.
    \end{equation*}
    It follows from \cite[Theorem 3 and 4]{Magnus} that $\{A_{p^s}, A_{p^t}\}$ is a basis of a free subgroup $F'_2$ of $\SL(2,\mathbb{Z})$. Moreover, every element of $F'_2\setminus\{I_2\}$ is {\em nonparabolic}, i.e., its trace is different from $\pm 2$. By our choice of $s$, we have $F'_2 \le \SL(2,\mathbb{Z},\varepsilon)$.

    Let $\tau:=(2p^s,0)$ and $g:=(A_{p^s},\tau)$, $h:=(A_{p^t},\tau) \in \SA(2,\mathbb{Z},\varepsilon)$. Obviously, $\{g,h\}$ is a basis of a free subgroup $\Gamma \le \SA(2,\mathbb{Z},\varepsilon)$. Furthermore, if $\gamma=(L(\gamma), \tau(\gamma))\in \Gamma \setminus\{1_{\Gamma}\}$, then $L(\gamma)\in F'_2\setminus\{I_2\}$. Notice that the characteristic polynomial of $L(\gamma)$ is $\lambda^2 - \tr{L(\gamma)}+1$. Hence,
    $\det(I_2-L(\gamma))=2-\tr L(\gamma)\ne 0$ and
    $\Fix(\gamma):=(I_2-L(\gamma))^{-1}\tau(\gamma)\in \mathbb{Q}^2$ is the unique fixed point of $\gamma$ in $K^2$. We have $\Fix(g)=(0,-1)\ne (0,-p^{s-t})=\Fix(h)$. It follows that $\Gamma$ does not have a global fixed point in $\mathbb{Q}^2$. Let us put $S:=\{g,h,g^{-1},h^{-1},1_{\Gamma}\}$.

    Since the group $\Gamma$, generated by $S$, is a non-Abelian free group, it is not virtually solvable. By an application of \cite[Theorem 1.1]{Pham}, there exists $N\in\mathbb{N}$ such that the product $S^N \subseteq \Gamma$ contains two elements $a,b$ forming a basis of a free group of rank two $F_2$ whose action on $\mathbb{Q}^2$ is locally commutative. Clearly, $F_2\le \Gamma\le  \SA(2,\mathbb{Z},\varepsilon)$. It is easy to see that any $x\in K^2 \setminus \mathbb{Q}^2$ has the trivial stabilizer in $\Gamma$, so the action of $F_2$ is locally commutative on $K^2$ as well.

    We get the claim for $n>2$ applying Lemma \ref{embedding}.
\end{proof}

To obtain a similar result for the case when $\ch{K}=\ch{k}$, we will need another technique. It is borrowed from the reasoning in the proof of \cite[Proposition C.4]{Tao}. However, in the non-Archimedean case many calculations are easier and, provided that some simple conditions are fulfilled, we are able to find explicit generators of $F_2$ of a form that will be useful later in the proof of Proposition \ref{equal_characteristics}.

It is worth noting that the paper \cite{Tao} deals with an expansion property of random Cayley graphs of certain finite groups. Appendix C therein is auxiliary in nature and serves as an intermediate step in the proof of the main theorem in an exceptional case, namely for the symplectic group of order $4$ over a finite field of characteristic $3$.

Pham formulated the following abstract lemma providing locally commutative actions of $F_2$ under certain assumptions. It is inspired from \cite[Proposition C.4]{Tao} and will be called the {\em ping-pong lemma}.

\begin{lem}[{\cite[Lemma 3.2]{Pham}}]\label{Pham's_ping-pong}
    Let $X$ be a $G$-set and $S=\{a,a^{-1},b,b^{-1}\}\subseteq G$. Assume that, for each $s\in S$, we are given sets $U_{s}^{+}, U_{s}^{-}\subseteq X$ with the following properties:
    \begin{align}
        s(X\setminus U_{s}^{-})               & \subseteq  U_{s}^{+} & \text{for }  & s\in S, \label{eq:Pham1}                           \\
        U_{x}^{+}\cap U_{y}^{-}               & =\emptyset           & \text {for } & x,y\in S, \;y\ne x^{-1},\label{eq:Pham2}           \\
        U_{x}^{-}\cap U_{y}^{-}\cap U_{z}^{-} & =\emptyset           & \text {for } & x,y,z\in S, \;\text{all distinct}.\label{eq:Pham3}
    \end{align}
    Further, assume that there exist a function $f\colon X\to \mathbb{R}$ such that
    \begin{equation}\label{eq:Pham4}
        f(sx)>f(x), \quad \text{for } x\in X\setminus U_{s}^{-},\quad s\in S.
    \end{equation}
    Then, $\{a,b\}$ is a basis of a non-Abelian free group $F_2\le G$ whose action on $X$ is locally commutative.
\end{lem}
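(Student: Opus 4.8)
The plan is to treat $f$ as a height (Lyapunov) function and run a ping-pong argument in two stages: first the freeness of $\{a,b\}$, then the local commutativity of the resulting action. The elementary engine is a single \emph{propagation step} obtained by combining \eqref{eq:Pham1} and \eqref{eq:Pham4}: if $x\notin U_s^-$, then $sx\in U_s^+$ and $f(sx)>f(x)$. The role of \eqref{eq:Pham2} is to chain such steps along a \emph{reduced} word: once a point lands in $U_s^+$ it avoids every $U_t^-$ with $t\ne s^{-1}$, and in a reduced word the next letter is never the inverse of the previous one. Throughout I assume $X\ne\emptyset$, which is implicit.

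\textbf{Freeness.} I would show that every nonempty reduced word $w=s_m\cdots s_1$ (with $s_1$ applied first) acts nontrivially. First, by induction on $k$, for any $x$ with $x\notin U_{s_1}^-$ the partial images $x_k:=s_k\cdots s_1x$ satisfy $x_k\in U_{s_k}^+$ and $f(x_k)>f(x_{k-1})$: the base case is the propagation step, while the inductive step uses $x_{k-1}\in U_{s_{k-1}}^+$ together with $s_k\ne s_{k-1}^{-1}$ and \eqref{eq:Pham2} to conclude $x_{k-1}\notin U_{s_k}^-$, after which the propagation step applies again. Telescoping gives $f(wx)>f(x)$, hence $wx\ne x$. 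It then remains to produce one admissible starting point. Since \eqref{eq:Pham3} forbids any point from lying in three distinct sets $U_s^-$, at most two of the four sets $U_s^-$ can equal $X$; thus among the three letters $t\ne s_1^{-1}$ there is one with $U_t^-\ne X$. Choosing $z\notin U_t^-$ and setting $y:=tz\in U_t^+$, relation \eqref{eq:Pham2} (with $s_1\ne t^{-1}$) yields $y\notin U_{s_1}^-$, so $wy\ne y$ and $w\ne 1$. Hence $\{a,b\}$ freely generates a subgroup $F_2\le G$.

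\textbf{Local commutativity.} Fix $x\in X$ and set $P:=\{s\in S:x\in U_s^-\}$; by \eqref{eq:Pham3}, $|P|\le 2$. The key observation — call it Claim A — is that the telescoping above can only fail at the \emph{first} letter: if a reduced word $w\ne1$ with first-applied letter $s_1$ satisfies $wx=x$, then necessarily $x\in U_{s_1}^-$, i.e.\ $s_1\in P$ (otherwise $f(wx)>f(x)$, a contradiction). Since the stabilizer $\mathrm{Stab}(x)$ is closed under inverses, \emph{every} nontrivial $w\in\mathrm{Stab}(x)$ has its first-applied letter in the two-element set $P$. Now suppose $g,h\in\mathrm{Stab}(x)$ do not commute; in the free group $F_2$ two non-commuting elements generate a free subgroup of rank two, each nontrivial element of which again fixes $x$ and hence, by Claim A, has first-applied letter in $P$. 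I would derive a contradiction by exhibiting a nontrivial element of $\langle g,h\rangle$ whose reduced form begins with a letter outside $P$ (for instance, when the first-applied letters of $g$ and $h$ exhaust $P$, a product such as $g^{-1}h$ or its inverse exposes a forbidden first letter).

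\textbf{Main obstacle.} I expect this last step — converting the metric/dynamical data of Claim A into the rigid conclusion that $\mathrm{Stab}(x)$ is cyclic — to be the crux. Everything in the freeness argument and in Claim A is bookkeeping with \eqref{eq:Pham1}--\eqref{eq:Pham4}, but ruling out a rank-two stabilizer requires a genuine combinatorial lemma on reduced words in $F_2$: a subgroup all of whose nontrivial elements have first-applied letter confined to a fixed set of size at most two (and, symmetrically, last-applied letter confined to its inverse set) must be cyclic. Making this precise, presumably by a minimal-length or Nielsen-type argument showing that any two non-commuting such elements can be cancelled against one another to expose a forbidden letter, is where the real work lies.
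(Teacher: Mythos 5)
The paper itself offers no proof of this lemma: it is imported verbatim from Pham \cite[Lemma 3.2]{Pham}, so there is no in-paper argument to compare yours against, and your proposal has to stand on its own. Your freeness argument is correct and complete: the propagation step combining \eqref{eq:Pham1} and \eqref{eq:Pham4}, the chaining along reduced words via \eqref{eq:Pham2}, and the use of \eqref{eq:Pham3} to produce an admissible starting point all work. Claim A is also correct. The gap is exactly where you place it, but it is worse than ``remaining work'': the combinatorial lemma you propose to prove is \emph{false}. A subgroup of $F_2$ all of whose nontrivial elements have first-applied letter in a fixed set $P$ with $\lvert P\rvert\le 2$ and last-applied letter in $P^{-1}$ need not be cyclic. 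For $P=\{a,a^{-1}\}$ take $H=\langle\, aba,\ ab^{2}a\,\rangle$: each generator has the form $a\,b^{k}\,a$ with $k\in\{1,2\}$, and in any reduced product of these generators and their inverses the only cancellation at a junction is $a^{\pm1}a^{\mp1}$ followed by $b^{k}b^{-k'}$ with $k\ne k'$, so every nontrivial element of $H$ reduces to a word whose two extremal letters lie in $\{a,a^{-1}\}$; yet $H$ is free of rank two, hence non-Abelian. For $P=\{a,b\}$ the subgroup $\langle\, a^{-1}ba,\ b^{-1}ab\,\rangle$ plays the same role. Since \eqref{eq:Pham3} does not forbid $U_{a}^{-}\cap U_{a^{-1}}^{-}\ne\emptyset$ (in the application in Lemma \ref{auxiliary} this intersection is the open unit ball), the case $\lvert P\rvert=2$ genuinely occurs, so no argument that uses only the first-letter information of Claim A can close the proof.

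What is actually needed is more of the dynamical data, in particular the $U^{+}$ sets along trajectories of cyclic conjugates. If $g=u^{-1}vu$ with $v$ cyclically reduced and nontrivial fixes $x$, then $v$ and $v^{-1}$ fix $ux$, so Claim A places $ux$ in two of the sets $U_{s}^{-}$; by \eqref{eq:Pham3} it lies in neither of the other two, whence \eqref{eq:Pham1} puts the images of $ux$ under the remaining letters into $U^{+}$ sets, and \eqref{eq:Pham2} then excludes $x$ itself from various $U^{-}$ sets. Concretely, if $a^{-1}ba$ and $b^{-1}ab$ both fixed $x$, then $ax$ would be fixed by $b$ and $b^{-1}$, so $ax\in U_{b}^{-}\cap U_{b^{-1}}^{-}$, hence $ax\notin U_{a^{-1}}^{-}$ by \eqref{eq:Pham3}, hence $x=a^{-1}(ax)\in U_{a^{-1}}^{+}$ by \eqref{eq:Pham1}, which by \eqref{eq:Pham2} contradicts $x\in U_{b}^{-}$, the membership that Claim A requires for $b^{-1}ab$ to fix $x$. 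An argument of this kind, run systematically over reduced words and their cyclic conjugates (this is the content of Pham's proof), is what the local commutativity statement requires; confining first and last letters to a two-element set does not suffice.
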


Let us prove the following result, which makes use of the ping-pong lemma.

\begin{lem}\label{auxiliary}
    Let $(K,\lvert\blank\rvert)$ be a non-Archimedean valued field. Assume that $\alpha, \beta, \gamma, \delta, \tau_1, \tau_2, \lambda\in K$ satisfy the following conditions:
    \begin{equation}\label{aux_conditions}
        \begin{gathered}
            \lvert \alpha\rvert = \lvert\beta\rvert = \lvert\gamma\rvert = \lvert\delta\rvert = \lvert\tau_1\rvert = \lvert\tau_2\rvert =
            \lvert \beta \tau_2 - \delta \tau_1 \rvert =
            \lvert \gamma \tau_1 - \alpha \tau_2\rvert, \\
            \lvert \lambda \rvert > 1,
            \qquad \alpha \delta - \beta \gamma =1.
        \end{gathered}
    \end{equation}
    Let also
    $a:=\Big( \begin{array}{cc}
                \lambda & 0            \\
                0       & \lambda^{-1}
            \end{array}
        \Big)$,
    $L:=\Big( \begin{array}{cc}
                \alpha & \beta  \\
                \gamma & \delta
            \end{array}
        \Big)$, $\tau:=(\tau_1,\tau_2)$, $h:=(L,\tau)$ and $b:=hah^{-1}$.
    Then $\{a,b\}$ is a basis of a free subgroup $F_2$ of $\SA(2,K)$ whose action on $K^2$ is locally commutative.
\end{lem}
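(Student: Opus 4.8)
The plan is to apply the ping-pong lemma (Lemma \ref{Pham's_ping-pong}) to the four-element set $S=\{a,a^{-1},b,b^{-1}\}\subseteq\SA(2,K)$ acting on $X=K^2$, using the single height function $f(v):=\lVert v\rVert_{\infty}=\max\{\lvert v_1\rvert,\lvert v_2\rvert\}$. First I would record the dynamical picture underlying \eqref{aux_conditions}. Since $\lvert\lambda\rvert>1$, the diagonal map $a$ is \emph{hyperbolic}: it fixes $\mathbf 0$, expands the first coordinate axis by the factor $\lvert\lambda\rvert$ and contracts the second. Because $\det L=\alpha\delta-\beta\gamma=1$, the conjugate $b=hah^{-1}$ again lies in $\SA(2,K)$, it fixes $h(\mathbf 0)=\tau$, and its attracting and repelling eigendirections are $Le_1=(\alpha,\gamma)$ and $Le_2=(\beta,\delta)$. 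Thus $a,b\in\SA(2,K)$, and it remains only to produce the data required by Lemma \ref{Pham's_ping-pong}.

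The cleanest way to secure condition \eqref{eq:Pham4} is to let the height function itself \emph{define} the repelling sets: for each $s\in S$ put $U_s^-:=\{v\in K^2:\lVert sv\rVert_{\infty}\le\lVert v\rVert_{\infty}\}$ and $U_s^+:=s(K^2\setminus U_s^-)$. Then \eqref{eq:Pham4} and \eqref{eq:Pham1} hold by construction. For $s=a^{\pm1}$ a direct ultrametric computation with $\lvert\lambda\rvert>1$ identifies these sets explicitly as the coordinate cones $U_a^-=\{\lvert v_2\rvert\ge\lvert\lambda\rvert\lvert v_1\rvert\}$ and $U_{a^{-1}}^-=\{\lvert v_1\rvert\ge\lvert\lambda\rvert\lvert v_2\rvert\}$, with $U_a^+,U_{a^{-1}}^+$ the opposite cones. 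For $s=b^{\pm1}$ the sets are the $h$-twisted analogues, which at large scale are cones around $(\beta,\delta)$, resp. $(\alpha,\gamma)$, with vertex $\tau$, but which fatten near $\tau$ because $f$ is centred at $\mathbf 0$ rather than at the fixed point of $b$.

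The two remaining hypotheses are where all the equalities in \eqref{aux_conditions} are spent. Condition \eqref{eq:Pham3} comes almost for free: $U_a^-\cap U_{a^{-1}}^-=\{\mathbf 0\}$ and $U_b^-\cap U_{b^{-1}}^-$ sits in a small neighbourhood of $\tau$, while $\lvert\tau_1\rvert=\lvert\tau_2\rvert$ places $\tau$ in the neutral band of $a$ (so $\tau\notin U_a^-\cup U_{a^{-1}}^-$) and $\mathbf 0\notin U_b^-\cup U_{b^{-1}}^-$ because $b(\mathbf 0)\ne\mathbf 0$; hence no point lies in three of the four repelling sets. Condition \eqref{eq:Pham2}, the genuine separation $U_x^+\cap U_y^-=\emptyset$ for $y\ne x^{-1}$, is the computational heart: after reducing by the symmetry $s\mapsto s^{-1}$ and by conjugation by $h$, it amounts to showing that the coordinate cones through $\mathbf 0$ are disjoint from the $(\alpha,\gamma)$- and $(\beta,\delta)$-cones through $\tau$. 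Here the four equal entry-norms $\lvert\alpha\rvert=\lvert\beta\rvert=\lvert\gamma\rvert=\lvert\delta\rvert$ make both eigendirections of $b$ transverse to the coordinate axes, while the two cross-term equalities $\lvert\beta\tau_2-\delta\tau_1\rvert=\lvert\gamma\tau_1-\alpha\tau_2\rvert$ force $L^{-1}\tau$ to have both coordinates of the common norm $c:=\lvert\alpha\rvert=\cdots$, encoding the transversality of $\tau$ to these eigendirections; together they keep the attracting cone of each generator clear of the repelling set of every other.

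The obstacle I anticipate is precisely the behaviour of the $b$-sets near $\tau$. Unlike the homogeneous $a$-cones, the sets $U_b^{\pm}$ are not exactly $h(U_a^{\pm})$, since $f(v)=\lVert v\rVert_{\infty}$ is flat on the ball $\{\lVert v-\tau\rVert_{\infty}<c\}$; one must verify by explicit non-Archimedean estimates (decomposing $v-\tau$ in the eigenbasis $(\alpha,\gamma),(\beta,\delta)$ and tracking the scale at which the translation part of $b$ becomes negligible) that this fattening is absorbed correctly, i.e. that it neither violates \eqref{eq:Pham2} nor creates a triple overlap in \eqref{eq:Pham3}. Once \eqref{eq:Pham1}--\eqref{eq:Pham4} are checked, Lemma \ref{Pham's_ping-pong} yields directly that $\{a,b\}$ is a basis of a free subgroup $F_2\le\SA(2,K)$ acting locally commutatively on $K^2$, which is the assertion.
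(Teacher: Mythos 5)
Your overall strategy is the right one --- this is exactly a ping-pong argument via Lemma \ref{Pham's_ping-pong} with the height function $f=\lVert\blank\rVert_{\infty}$ --- but as written the proposal has a genuine gap: conditions \eqref{eq:Pham2} and \eqref{eq:Pham3} are never verified. These are precisely the steps where every equality in \eqref{aux_conditions} is consumed, and you yourself flag them as ``the computational heart'' and ``the obstacle I anticipate,'' leaving the required non-Archimedean estimates as something that ``one must verify.'' A plan that defers the only nontrivial part of the argument is not a proof. The difficulty is compounded by your choice of repelling sets: taking the maximal sets $U_s^-:=\{v:\lVert sv\rVert_{\infty}\le\lVert v\rVert_{\infty}\}$ makes \eqref{eq:Pham1} and \eqref{eq:Pham4} automatic, but it leaves $U_b^{\pm}$ defined only implicitly (they are not $h$-images of the $a$-sets, as you note), so that \eqref{eq:Pham2}--\eqref{eq:Pham3} require analyzing sets you have no closed form for. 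There is no monotonicity that lets you enlarge $U_s^-$ for free: enlarging the repelling sets makes the disjointness conditions \eqref{eq:Pham2} and \eqref{eq:Pham3} strictly harder.

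The paper resolves this by trading the difficulty the other way. It defines $U_a^{-}$ and $U_{a^{-1}}^{-}$ \emph{explicitly}, fattened to contain the whole open unit ball $B(0,1)$ of $(K^2,\lVert\blank\rVert_{\infty})$, and sets $U_{b^{\pm1}}^{\pm}:=h(U_{a^{\pm1}}^{\pm})$ exactly. Then \eqref{eq:Pham2} and \eqref{eq:Pham3} reduce to two short ultrametric computations, namely the inclusions
\[
h^{\pm1}\big(U_{a}^{+}\cup U_{a^{-1}}^{+}\big)\subseteq\{(x,y):\lvert x\rvert=\lvert y\rvert\ge1\}
\quad\text{and}\quad
h^{\pm1}\big(B(0,1)\big)\subseteq\{(x,y):\lvert x\rvert=\lvert y\rvert\ge1\},
\]
where the target set is $(K^2\setminus U_a^-)\cap(K^2\setminus U_{a^{-1}}^-)$; both follow in a few lines from \eqref{aux_conditions} (the equal entry norms kill all cross terms, and the conditions on $\beta\tau_2-\delta\tau_1$ and $\gamma\tau_1-\alpha\tau_2$ handle $h^{-1}$ symmetrically). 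The price is that \eqref{eq:Pham4} for $s=b^{\pm1}$ is no longer automatic and must be proved separately, which the paper does via the chain $\lVert b(x,y)\rVert_{\infty}=\lvert\alpha\rvert\cdot\lvert\lambda\rvert\cdot\lVert h^{-1}(x,y)\rVert_{\infty}\ge\lvert\lambda\rvert\cdot\lVert(x,y)\rVert_{\infty}$. To complete your argument you would either need to carry out the deferred estimates for your implicit $U_{b^{\pm1}}^{-}$ (including showing that the ``fattened'' neighbourhood of $\tau$ inside $U_b^-\cap U_{b^{-1}}^-$ avoids $U_{a^{\pm1}}^{-}$, and that $U_{a^{\pm1}}^{+}$ avoids $U_{b^{\pm1}}^{-}$), or switch to explicit sets as the paper does.
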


\begin{proof}
    Let us define the sets
    \begin{equation*}
        \begin{split}
            U_{a}^{-}&:=\{(x,y)\in K^2 \colon \lVert(x,y)\rVert_{\infty} < 1 \text{ or } \lvert y\rvert > \lvert x\rvert\},\\
            U_{a^{-1}}^{-}&:=\{(x,y)\in K^2 \colon \lVert(x,y)\rVert_{\infty} < 1 \text{ or } \lvert x\rvert > \lvert y\rvert\},\\
            U_{a}^{+}&:=\{(x,y)\in K^2 \colon \lvert x\rvert > \lVert ( y, 1) \rVert_{\infty}\},\\
            U_{a^{-1}}^{+}&:=\{(x,y)\in K^2 \colon \lvert y\rvert > \lVert (x, 1) \rVert_{\infty}\}.
        \end{split}
    \end{equation*}
    Moreover, let $U_{b}^{-}:=h(U_{a}^{-})$, $U_{b^{-1}}^{-}:=h(U_{a^{-1}}^{-})$, $U_{b}^{+}:=h(U_{a}^{+})$, $U_{b^{-1}}^{+}:=h(U_{a^{-1}}^{+})$.

    We will show that the assumptions of Lemma \ref{Pham's_ping-pong} are fulfilled for $X=K^2$, $G=\SA(2,K)$, the sets $U_{s}^{-}$, $U_{s}^{+}$, $s\in S:=\{a,b,a^{-1},b^{-1}\}$, and the function $f=\lVert \blank \rVert_{\infty}\colon K^2 \to \mathbb{R}$.

    Assume that $(x,y)\in K^2\setminus U_{a}^{-}$. Since $\lvert x\rvert \ge \max\{\lvert y\rvert, 1\}$, we have $\lvert \lambda x\rvert > \max\{\lvert \lambda^{-1} y\rvert, 1\}$. Thus,
    $a(x,y)=(\lambda x, \lambda^{-1}y)\in U_{a}^{+}$
    and 
    \begin{equation}\label{eq:aux_1}
    \lVert a(x,y)\rVert_{\infty} = \lvert \lambda x\rvert = \lvert \lambda \rvert \cdot \lVert (x,y)\rVert_{\infty}>\lVert (x,y)\rVert_{\infty} \; \text{for } (x,y)\in K^2\setminus U_{a}^{-}.
    \end{equation}
    Similarly, if $(x,y)\in K^2\setminus U_{a^{-1}}^{-}$, then
    $a^{-1}(x,y)\in U_{a^{-1}}^{+}$ and $\lVert a^{-1}(x,y)\rVert_{\infty} = \lvert \lambda \rvert \cdot \lVert (x,y)\rVert_{\infty}>\lVert (x,y)\rVert_{\infty}$. Hence, we have proved the conditions \eqref{eq:Pham1} and \eqref{eq:Pham4} for $s\in \{a, a^{-1}\}$.
    
    The inclusions \eqref{eq:Pham1} for $s\in \{b,b^{-1}\}$ follow immediately from those already proved.

    The inverse of $h$ in $\SA(2,K)$ has the form $h^{-1}=(L^{-1},\tau')$, where $L^{-1}=\Big( \begin{array}{cc}
                \delta  & -\beta \\
                -\gamma & \alpha
            \end{array}
        \Big)$ and $\tau'=(\tau'_1,\tau'_2)$ for
    $\tau'_1:=\beta \tau_2 - \delta \tau_1$ and $\tau'_2:=\gamma \tau_1 - \alpha \tau_2$. The assumptions \eqref{aux_conditions} imply that all the entries of the linear part of $h^{-1}$ and the coordinates of its translation part have equal valuations, exactly as in the case of $h$.

    It is obvious that $U_{s}^{+}\cap U_{s}^{-}=\emptyset$ for $s\in S$.
    
    The remaining conditions in \eqref{eq:Pham2} follow from the inclusion
    \[h(U_{a}^{+}\cup U_{a^{-1}}^{+})\cup h^{-1}(U_{a}^{+}\cup U_{a^{-1}}^{+}) \subseteq (K^2 \setminus U_{a}^{-}) \cap (K^2 \setminus U_{a^{-1}}^{-}),\] where the latter set equals
    $\big\{(x,y)\in K^2 \colon \lvert x\rvert=\lvert y\rvert \ge 1\big\}$. Assume that $(x,y)\in U_{a}^{+}$ and let $(x',y'):=h(x,y)$.
    Since $\lvert x\rvert > \max\{\lvert y\rvert, 1\}$, by \eqref{aux_conditions} we have
    $\lvert \alpha x \rvert > \max\{\lvert \beta y \rvert, \lvert \tau_1 \rvert\}$
    and $\lvert \gamma x \rvert> \max\{\lvert\delta y\rvert, \lvert \tau_2\rvert \}$; so
    $\lvert x'\rvert = \lvert \alpha x + \beta y + \tau_1 \rvert=\lvert \alpha x\rvert$ and similarly $\lvert y'\rvert =\lvert\gamma x+\delta y + \tau_2 \rvert= \lvert \gamma x\rvert$. Notice that from \eqref{aux_conditions} it follows that $\lvert\alpha \rvert \ge 1$. Thus, $\lvert x'\rvert = \lvert y'\rvert =\lvert \alpha x\rvert >1$, so $(x',y')\in (K^2 \setminus U_{a}^{-}) \cap (K^2 \setminus U_{a^{-1}}^{-})$. If $(x,y)\in U_{a^{-1}}^{+}$, we just change the roles of $x$ and $y$, obtaining $h(x,y)\in (K^2 \setminus U_{a}^{-}) \cap (K^2 \setminus U_{a^{-1}}^{-})$. Simultaneously, we have shown that
    \begin{equation}\label{eq:aux_2}
    \lVert h(x,y)\rVert_{\infty}=\lvert \alpha \rvert \cdot \lVert (x,y)\rVert_{\infty} \; \text{for } (x,y)\in U_{a}^{+}\cup U_{a^{-1}}^{+}.
    \end{equation} 

    From the remark about the form of $h^{-1}$, it becomes evident that $h^{-1}(U_{a}^{+}\cup U_{a^{-1}}^{+}) \subseteq (K^2 \setminus U_{a}^{-}) \cap (K^2 \setminus U_{a^{-1}}^{-})$ as well.

    The conditions \eqref{eq:Pham3} follow from the inclusion
    \[
        h(U_{a}^{-}\cap U_{a^{-1}}^{-})\cup h^{-1}(U_{a}^{-}\cap U_{a^{-1}}^{-}) \subseteq (K^2 \setminus U_{a}^{-}) \cap (K^2 \setminus U_{a^{-1}}^{-}).
    \]
    Notice that $U_{a}^{-}\cap U_{a^{-1}}^{-}$ is the open ball $B(0,1)$ in $(K^2,\lVert\blank\rVert_{\infty})$.
    Assume that $(x,y)\in B(0,1)$ and let $(x',y'):=h(x,y)$. Since
    $\lvert \alpha x + \beta y \rvert \le \lvert \alpha \rvert \cdot \lVert (x,y)\rVert_{\infty} <  \lvert \tau_1\rvert$, we have $\lvert x' \rvert = \lvert \tau_1 \rvert$ and similarly $\lvert y' \rvert = \lvert \tau_2\rvert$. Thus, $\lvert x'\rvert = \lvert y'\rvert =\lvert \tau_1 \rvert \ge 1$, so $h(U_{a}^{-}\cap U_{a^{-1}}^{-}) \subseteq (K^2 \setminus U_{a}^{-}) \cap (K^2 \setminus U_{a^{-1}}^{-})$. The proof for $h^{-1}$ is analogous.

    It remains to show \eqref{eq:Pham4} for $s\in \{b,b^{-1}\}$. First, let us observe that
    \begin{equation}\label{eq:aux_3}
    \lVert h(x,y)\rVert_{\infty} \le \lvert \alpha \rvert \cdot \lVert (x,y) \rVert_{\infty} \; \text{for } (x,y)\in (K^2\setminus U_{a}^{-}) \cup (K^2\setminus U_{a^{-1}}^{-}).
    \end{equation}
    Indeed, if for example $(x,y)\in K^2\setminus U_{a}^{-}$ and $(x',y'):=h(x,y)$, then $\lvert x'\rvert = \lvert \alpha x + \beta y + \tau_1 \rvert \le \lvert \alpha x \rvert = \lvert \alpha \rvert  \cdot \lVert (x,y) \rVert_{\infty}$ and similarly $\lvert y'\rvert \le  \lvert \alpha \rvert \cdot \lVert (x,y) \rVert_{\infty}$. Assume now that $(x,y)\in K^2 \setminus U_{b}^{-}$. Then $h^{-1}(x,y)\in K^2 \setminus U_{a}^{-}$, so $ah^{-1}(x,y)\in U_{a}^{+}$. We thus obtain $\lVert b(x,y)\rVert_{\infty} = \lVert hah^{-1}(x,y)\rVert_{\infty}=\lvert \alpha \rvert \cdot \lVert ah^{-1}(x,y)\rVert_{\infty}$ by \eqref{eq:aux_2}.
    Then, from \eqref{eq:aux_1} we get
    $\lVert ah^{-1}(x,y)\rVert_{\infty} = \lvert \lambda \rvert \cdot \lVert h^{-1}(x,y)\rVert_{\infty}$.
    Since $\lVert (x,y)\rVert_{\infty}\le \lvert \alpha\rvert \cdot \lVert h^{-1}(x,y)\rVert_{\infty}$ by \eqref{eq:aux_3}, eventually we obtain
    \[\lVert b(x,y)\rVert_{\infty}= \lvert\alpha\rvert\cdot \lvert\lambda\rvert \cdot \lVert h^{-1}(x,y)\rVert_{\infty}\ge   \lvert\lambda\rvert\lVert (x,y)\rVert_{\infty}>\lVert (x,y)\rVert_{\infty}\] 
    if $(x,y)\ne (0,0)$. If $(x,y)=(0,0)$, then $\lVert b(x,y)\rVert_{\infty}=\lvert \alpha \rvert \cdot \lvert \lambda \rvert \cdot \lVert \tau'\rVert_{\infty}=\lvert \alpha\rvert^2 \cdot \lvert \lambda\rvert >0=\lVert (x,y)\rVert_{\infty}$. The proof of \eqref{eq:Pham4} for $s=b^{-1}$ is similar.
\end{proof}

\begin{prop}\label{equal_characteristics}
    Let $(K,\lvert\blank\rvert)$ be a non-Archimedean nontrivially valued field with the residue field $k$. Assume that $\ch{K}=\ch{k}$. Then, for any $n\ge 2$ and $\varepsilon\in (0,1]$, there exists a free subgroup of rank two $F_2 \le \SA(n,D_{K},\varepsilon)$ acting locally commutatively on $K^n$.
\end{prop}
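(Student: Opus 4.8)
The plan is to reduce to the two-dimensional case and then to exploit a \emph{second} valuation on $K$, in the same spirit as the Magnus construction used in Proposition~\ref{different_characteristics}: one valuation will witness freeness and local commutativity through Lemma~\ref{auxiliary}, while the original valuation $\lvert\blank\rvert$ will guarantee membership in the deep congruence subgroup $\SA(2,D_K,\varepsilon)$. This two-valuation device is forced on us, since every element of $\SL(2,D_K,\varepsilon)$ has eigenvalues of absolute value $1$, so the expanding (``loxodromic'') generators produced by Lemma~\ref{auxiliary} can never lie, even after conjugation, in a congruence subgroup with respect to $\lvert\blank\rvert$ itself.

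First I would invoke Lemma~\ref{embedding} to reduce to $n=2$. Since $\lvert\blank\rvert$ is nontrivial I pick $\pi\in K$ with $0<\lvert\pi\rvert<1$ and, replacing $\pi$ by a power, arrange $\lvert\pi\rvert\le\varepsilon$. Because $\ch K=\ch k$, the valuation $\lvert\blank\rvert$ is trivial on the prime field $k_0$ of $K$; as a valuation that is trivial on a field restricts trivially to every algebraic extension, $\lvert\pi\rvert\ne 1$ forces $\pi$ to be transcendental over $k_0$. Hence $k_0(\pi)\cong k_0(T)$ is a rational function field on which $\lvert\blank\rvert$ restricts (up to equivalence) to the $\pi$-adic valuation, and on which there is an inequivalent non-Archimedean valuation, the degree-at-infinity valuation $\lvert\blank\rvert_{\infty}$, with $\lvert\pi\rvert_{\infty}>1$. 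By Krull's theorem I extend $\lvert\blank\rvert_{\infty}$ to a non-Archimedean valuation on all of $K$, still denoted $\lvert\blank\rvert_{\infty}$, which keeps $\lvert\pi\rvert_{\infty}>1$.

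Next I would choose the data of Lemma~\ref{auxiliary} so as to satisfy two requirements simultaneously, one in each valuation. Put $\lambda:=1+\pi^{N}$ for large $N$; then $\lvert\lambda-1\rvert\le\varepsilon$ and $\lvert\lambda^{-1}-1\rvert=\lvert\pi^{N}\rvert\le\varepsilon$, so $a=\left(\begin{smallmatrix}\lambda&0\\0&\lambda^{-1}\end{smallmatrix}\right)\in\SL(2,D_K,\varepsilon)$, while $\lvert\lambda\rvert_{\infty}=\lvert\pi\rvert_{\infty}^{N}>1$. I then select $\alpha,\beta,\gamma,\delta,\tau_1,\tau_2\in D_K$, taken as polynomials in $\pi$ with $\alpha\delta-\beta\gamma=1$, so that in the valuation $\lvert\blank\rvert_{\infty}$ all of $\alpha,\beta,\gamma,\delta,\tau_1,\tau_2$ together with $\beta\tau_2-\delta\tau_1$ and $\gamma\tau_1-\alpha\tau_2$ have one common absolute value; that is, the hypotheses \eqref{aux_conditions} hold for the valued field $(K,\lvert\blank\rvert_{\infty})$. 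Applying Lemma~\ref{auxiliary} over $(K,\lvert\blank\rvert_{\infty})$ with $L=\left(\begin{smallmatrix}\alpha&\beta\\\gamma&\delta\end{smallmatrix}\right)$, $\tau=(\tau_1,\tau_2)$, $h=(L,\tau)$, $b=hah^{-1}$ then shows that $\{a,b\}$ is a basis of a free group $F_2$ acting locally commutatively on $K^2$. The key point is that local commutativity is a set-theoretic property of the action on $K^2$, so although it is verified through $\lvert\blank\rvert_{\infty}$, it is exactly the property we need.

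It remains to check $F_2\le\SA(2,D_K,\varepsilon)$, now using only $\lvert\blank\rvert$. We already have $a\in\SA(2,D_K,\varepsilon)$. For $b=hah^{-1}$, note that $L\in\SL(2,D_K)$ and $\tau\in D_K^2$ are integral, so reduction modulo $\pi^{N}$ is a ring homomorphism carrying the linear part of $a$ to $I_2$; hence the linear part $L\,\mathrm{diag}(\lambda,\lambda^{-1})L^{-1}$ of $b$ is integral and congruent to $I_2$ modulo $\pi^N$, while its translation part $(I_2-L\,\mathrm{diag}(\lambda,\lambda^{-1})L^{-1})\tau$ is integral and congruent to $0$ modulo $\pi^N$. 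Thus $b\in\SA(2,D_K,\varepsilon)$, so $F_2\le\SA(2,D_K,\varepsilon)$, and Lemma~\ref{embedding} disposes of $n>2$. I expect the main obstacle to be the explicit simultaneous construction of $\alpha,\dots,\tau_2$: the determinant condition $\alpha\delta-\beta\gamma=1$ combined with the requirement that all six entries and the two cross-combinations share a common degree in $\pi$ forces the top-degree terms of $\alpha\delta$ and $\beta\gamma$ to cancel, which is delicate when the residue field is small (for example $k=\mathbb{F}_2$) and calls for a tailored choice of polynomials in $\pi$ rather than constants.
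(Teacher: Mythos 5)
Your proposal follows essentially the same route as the paper's proof: reduce to $n=2$ via Lemma \ref{embedding}; note that an element $t$ with $0<\lvert t\rvert<\varepsilon$ must be transcendental over the prime subfield because $\lvert\blank\rvert$ is trivial there and extends uniquely to algebraic extensions; introduce the auxiliary ``degree'' valuation on $K_0(t)$ and extend it to $K$ by Krull's theorem; feed that second valuation into Lemma \ref{auxiliary} to obtain freeness and local commutativity (which, as you correctly say, are valuation-independent properties of the action); and verify membership in $\SA(2,D_K,\varepsilon)$ using the original valuation. Your motivating observation that every element of $\SL(2,D_K,\varepsilon)$ has eigenvalues of absolute value $1$, so that the two-valuation device is forced, and your congruence argument showing $b=hah^{-1}\in\SA(2,D_K,\varepsilon)$ from integrality of $L$ and $\tau$ alone, are both sound. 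The one step you leave open is precisely the constructive heart of the argument: exhibiting $h=(L,\tau)$ satisfying \eqref{aux_conditions} for the auxiliary valuation. This is less delicate than you fear and requires no case analysis on the residue field: the paper takes $\lambda=1+t$, $L=\bigl(\begin{smallmatrix}1+t & t\\ -t & 1-t\end{smallmatrix}\bigr)$ and $\tau=(-t,t)$, so that all six entries, together with $\beta\tau_2-\delta\tau_1=t$ and $\gamma\tau_1-\alpha\tau_2=-t$, are degree-one polynomials in $t$ (hence share the common auxiliary absolute value $2$), while $\alpha\delta-\beta\gamma=(1-t^2)-(-t^2)=1$; the top-degree cancellation you worry about is realized by this choice in every characteristic, including $2$. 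With this explicit $h$ inserted (and $N=1$, i.e.\ your $\pi^N$ playing the role of $t$), your argument is complete and coincides with the paper's.
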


\begin{proof}
    Assume first that $n=2$. Let $\varepsilon\in (0,1]$ and denote by $K_0$ the prime subfield of $K$. Since $\ch{K}=\ch{k}$, the valuation $\lvert \blank \rvert$ is trivial on $K_0$. Let $t\in K$ be any element satisfying $0<\lvert t\rvert<\varepsilon$. By \cite[Theorem 14.2]{UC}, $t$ is necessarily transcendental over $K_0$, so the subfield $K_0(t)\subseteq K$ is naturally isomorphic to the field $K_0(X)$ of rational functions of one variable over $K_0$.

    Let us define an auxiliary non-Archimedean valuation $\lvert \blank \rvert_1$ on $K_0(t)$ as follows (cf. \cite [Example 2.1]{UC}):
    \[
        \lvert 0 \rvert_1:=0, \; \left\lvert \frac{f(t)}{g(t)}\right\rvert_1:= 2^{\deg{f}-\deg{g}} \quad \text{for } f,g\in K_0[X], \; g\ne 0,
    \]
    and let us extend it (by Krull's theorem \cite[Theorem 14.1]{UC}) to a non-Archimedean valuation on $K$, also denoted by $\lvert \blank\rvert_1$.

    Applying Lemma \ref{auxiliary} to the valued field $(K,\lvert \blank \rvert_1)$, we conclude that
    $a=\Big( \begin{array}{cc}
                1+t & 0          \\
                0   & (1+t)^{-1}
            \end{array}
        \Big)$ and $b=hah^{-1}$, where
    $h=\left(\Big( \begin{array}{cc}
                1+t & t   \\
                -t  & 1-t
            \end{array}
        \Big), (-t, t)\right)$,
    form a basis of a free subgroup $F_2\le \SA(2,K)$ acting locally commutatively on $K^2$. Since $\lvert t \rvert< \varepsilon$, it follows that $a,b\in \SA(2,D_{K},\varepsilon)$, where $D_K$ is the ring of integers of $(K,\lvert \blank\rvert)$; so $F_2\le \SA(2,D_{K},\varepsilon)$.

    We get the claim for $n>2$ applying Lemma \ref{embedding}.
\end{proof}

\begin{uw}
    For a transcendental extension $K=\mathbb{Q}(t)$ of $\mathbb{Q}$, the problem of finding a free subgroup of rank two $F_2\le \SA(2,K)$ acting locally commutatively on $K^2$ was solved by Sat\^{o} \cite{Sato}. However, if we equip $K$ with a non-Archimedean valuation trivial on $\mathbb{Q}$, the group $F_2$ constructed there is not contained in $\SA(2,D_K,\varepsilon)$ for any $\varepsilon \in (0,1)$. Our approach from Proposition \ref{equal_characteristics} provides a group with this additional property, and works in the case of $\ch{K}=\ch{k}>0$ as well.
\end{uw}

Let us state a theorem that combines Propositions \ref{different_characteristics} and \ref{equal_characteristics}.

\begin{tw}\label{th:SA(n,D,e)-paradoxicality}
    Let $(K,\lvert\blank\rvert)$ be a non-Archimedean nontrivially valued field, $n\ge 2$ and $\varepsilon\in(0,1]$. Then any nonempty $\SA(n,D_{K},\varepsilon)$-invariant subset $E\subseteq K^n$ is $\SA(n,D_{K},\varepsilon)$-paradoxical using four pieces.
\end{tw}

\begin{proof}
    Let us fix $\varepsilon \in (0,1]$. Depending on whether the characteristics of $K$ and its residue field $k$ are distinct or equal, we apply Proposition \ref{different_characteristics} or \ref{equal_characteristics} to obtain a free subgroup of rank two ${F_2\le \SA(n,D_{K},\varepsilon)}$ whose action on $K^n$, and so on any $\SA(n,D_{K},\varepsilon)$-invariant subset $E\subseteq K^n$, is locally commutative. Therefore, $E$ is $\SA(n,D_{K},\varepsilon)$-paradoxical using four pieces.
\end{proof}

We are now ready to prove our main result.

\begin{tw}\label{th:main}
    Let $(K,\lvert\blank\rvert)$ be a non-Archimedean nontrivially valued field, $n\ge 2$, and let $\Vert \blank \rVert$ be a non-Archimedean norm on $K^n$ equivalent to the norm $\lVert \blank\rVert_{\infty}$. Then $K^n$, any closed ball $B[x_0,r]$, any open ball $B(x_0,r)$, and any nonempty sphere $S[x_0,r]$, where $x_0\in K^n$ and $r>0$, are paradoxical with respect to the group of affine isometries of $(K^n,\lVert\blank\rVert)$ using four pieces.
\end{tw}

\begin{proof}
    By Proposition \ref{IL_are_congruence_groups}, there exists $\varepsilon_0\in (0,1]$ such that $\SA(n,D_K,\varepsilon_0)$ is a subgroup of $\IA(K^n,\lVert\blank\rVert)$. Thus, the claim for $K^n$ immediately follows from Theorem \ref{th:SA(n,D,e)-paradoxicality}.

    Fix $x_0\in K^n$ and $r>0$. Let $C>0$ be such that $\lVert x\rVert \le C \lVert x\rVert_{\infty}$ for all $x\in K^n$. Define
    \[
    \varepsilon:=
    \begin{cases}
    \frac{1}{2} \min \big\{ \varepsilon_0, \frac{r}{C}, \frac{r}{C \lVert x_0 \rVert_{\infty}} \big\} & \text{if } x_0 \neq \mathbf{0}, \\
    \frac{1}{2} \min \big\{ \varepsilon_0, \frac{r}{C} \big\} & \text{if } x_0 = \mathbf{0}.
    \end{cases}
    \]
    We now show that $\lVert g(x_0)-x_0\rVert<r$ for any $g=(A,\tau)\in \SA(n,D_K,\varepsilon)$. Estimating as in the proof of Proposition \ref{IL_are_congruence_groups}, we get 
    \begin{gather*}
    \lVert Ax_0 - x_0\rVert \le C \lVert Ax_0 - x_0\rVert_{\infty} \le C \varepsilon \lVert x_0 \rVert_{\infty}<r,\\
    \lVert \tau\rVert \le C \lVert \tau \rVert_{\infty} \le C \varepsilon < r.
    \end{gather*}
    Hence, $\lVert g(x_0)-x_0\rVert=\lVert Ax_0-x_0+\tau\rVert < r$.

    Any $g\in \SA(n,D_K,\varepsilon)$ is a surjective isometry of $K^n$; so the images of $B[x_0,r]$, $B(x_0,r)$, $S[x_0,r]$ under $g$ are equal to $B[g(x_0),r]$, $B(g(x_0),r)$ and $S[g(x_0),r]$, respectively. Since $\lVert g(x_0)-x_0\rVert < r$, we obtain {$B[g(x_0),r]=B[x_0,r]$} and $B(g(x_0),r)=B(x_0,r)$  (by the strong triangle inequality), as well as $S[g(x_0),r]=S[x_0,r]$ (by the isosceles property). Thus, the sets under consideration are $\SA(n,D_K,\varepsilon)$-invariant and we can apply Theorem \ref{th:SA(n,D,e)-paradoxicality} for each of them.
\end{proof}

It is easy to see that every finite-dimensional normed space $(X,\Vert\blank\rVert)$ over $(K,\lvert\blank\rvert)$ is linearly isometric to $(K^n, \lVert\blank\rVert^{'})$ for some norm $\lVert\blank\rVert{'}$ on $K^n$, where $n=\dim{X}$. Thus, we get the following corollary.

\begin{wn}\label{wn:fin_dim_complete}
    Let $(X,\lVert\blank\rVert)$ be finite-dimensional normed space over a complete non-Archimedean nontrivially valued field $(K,\lvert\blank\rvert)$. If ${\dim{X}\ge 2}$, then $X$, all balls and all nonempty spheres in $X$ are paradoxical with respect to the group of affine isometries of $(X,\lVert\blank\rVert)$ using four pieces.
\end{wn}

\begin{uw}
    Assume that $(K,\lvert\blank\rvert)$ is a non-Archimedean nontrivially valued field.
    We have shown (implicitly in the proofs of Proposition \ref{different_characteristics} and \ref{equal_characteristics}) that, for any $n\ge 2$ and $\varepsilon \in (0,1]$, the group $\SL(n,D_K,\varepsilon)$ contains elements which are not scalar multiples of $I_n$. Thus, by Corollary \ref{cor:complete}, if $(K,\lvert\blank\rvert)$ is complete and $n\ge 2$, every normed space $(K^n,\lVert\blank\rVert)$ admits linear isometries different from $\lambda \, I_n$, $\lambda\in K$. The situation is quite different in the Archimedean case \cite{Jarosz}, where any (real or complex) Banach space $(X,\lVert \blank\rVert)$ has an equivalent norm $\lVert\blank\rVert^{'}$ such that any surjective linear isometry of $(X,\lVert\blank\rVert^{'})$ is of the form $\lambda \Id_{X}$ for some scalar $\lambda$ with $\lvert \lambda\rvert=1$.  
\end{uw}

Let us consider the case of a normed space $(K^n,\lVert\blank\rVert)$ over a trivially valued field $(K,\lvert \blank\rvert)$. Every closed ball $B[0,r]$, $r>0$, is then a linear subspace of $K^n$, so it is easy to see that the norm $\lVert \blank\rVert$ takes at most $n$ nonzero values.

\begin{tw}\label{trivial_val}
    Let $(K,\lvert\blank\rvert)$ be a trivially valued field that is not locally finite. Let $n\ge 2$ and $\lVert\blank\rVert$ be a non-Archimedean norm on $K^n$. If the norm $\lVert \blank \rVert$ takes less than $n$ nonzero values, then $K^n$ is $\IA(K^n,\lVert \blank \rVert)$-paradoxical using four pieces; otherwise $K^n$ is not $\IA(K^n,\lVert \blank \rVert)$-paradoxical.
\end{tw}

\begin{proof}
    Let $0< a_1 < \dots < a_s$ be the sequence of all  values of $\lVert\blank \rVert$. Put $V_0:=\{0\}$, $V_i:=B[0,a_i]$ for $1\le i\le s$, and $d_i:=\dim V_i$ for $0\le i\le s$. We have the chain $V_0 \subsetneq V_1 \subsetneq \dots \subsetneq V_s=K^n$ of linear subspaces of $K^n$. Hence, there exists a basis $B=\{b_1,\dots, b_n\}$ of $K^n$ such that $\Lin{\{b_j \colon 1\le j\le d_i\}}=V_i$ for $1\le i\le s$. Clearly, a linear automorphism $L$ of $K^n$ is an isometry of $(K^n,\lVert\blank\rVert)$ if and only if $L(V_i)\subseteq V_i$ for all $1\le i\le s$.
    
    If $s=n$, it happens precisely when the matrix of $L$ in the basis $B$ is an upper-triangular invertible matrix. Since the group of all such matrices is solvable and hence amenable \cite[Theorem 3.2]{KateJ}, so is $\IA(K^n,\lVert \blank \rVert)$, which implies that $K^n$ is not $\IA(K^n,\lVert \blank \rVert)$-paradoxical \cite[Theorem A.13]{KateJ}.

    Assume that $s<n$. There exists $1\le k\le s$ such that $d_k-d_{k-1}\ge 2$. Then $\iota(\GA(2,K))$, where $\iota$ is defined as in Lemma \ref{embedding} for $i=d_{k-1}+1$, represents (in the basis $B$), a subgroup of $\IA(K^n,\lVert \blank\rVert)$. It remains to show that $\GA(2,K)$ contains a free subgroup of rank two $F_2$ acting locally commutatively on $K^2$. Indeed, since $K$ is not locally finite, it admits a nontrivial non-Archimedean valuation and we can use Proposition \ref{different_characteristics} or \ref{equal_characteristics} to obtain a suitable $F_2\le \GA(2,K)$.
\end{proof}

Finally, we will consider the cases of locally finite fields and one-dimensional spaces.

 \begin{tw}\label{locally_fin}
    Let $K$ be a field and $n\in \mathbb{N}$.
    If $K$ is locally finite or $n=1$, then $K^n$ is not $\GA(n,K)$-paradoxical.
 \end{tw}
    
 \begin{proof}
    Assume that $K$ is a locally finite field.
 It is then easy to show that  $\GA(n,K)$ is a locally finite group (i.e., its every finite subset generates a finite subgroup). It follows from \cite[Theorem 12.4]{W} that any locally finite group is {\em amenable} \cite[Definition 12.1]{W}. By Tarski's theorem \cite[Theorem A.13]{KateJ}, any amenable group admits no paradoxical action.
    Therefore, $K^n$ is not $\GA(n,K)$-paradoxical for any $n\in \mathbb{N}$.

Similarly, if $K$ is any field, the group $\GA(1,K)\cong K^{*}\ltimes K$ is amenable since it is solvable \cite[Theorem 3.2]{KateJ}. Hence, $K$ is not $\GA(1,K)$-paradoxical.
\end{proof}

\section*{Acknowledgements}

The author would like to thank Professor Terence Tao for a helpful explanation, and Professor Wies{\l}aw \'Sliwa for many inspiring suggestions.

\bibliographystyle{siamplain.bst}
\bibliography{references}


\end{document}